\newtheorem{thm}{Theorem}[section]
\newtheorem{lem}[thm]{Lemma}
\theoremstyle{definition}
\theoremstyle{remark}
\newtheorem{rmk}[thm]{Remark}
\numberwithin{equation}{section}
\newcommand{\z}{{\mathbb Z}}
\newcommand{\q}{{\mathbb Q}}
\newcommand{\rank}{\text{rank}}
\newcommand{\floor}[1]{\left\lfloor #1 \right\rfloor}
\begin{document}


\author{BYEONG MOON KIM, Myung-Hwan Kim and Dayoon Park}
\address{Department of Mathematics, Kangnung National University, Kangnung, 210-702, Korea}
\email{kbm@kangnung.ac.kr}
\thanks{}

\address{Department of Mathematical Sciences and Research Institute of Mathematics,
Seoul National University, Seoul 08826, Korea}
\email{mhkimath@snu.ac.kr}
\thanks{This work was supported by the Institute for Information and Communications
Technology Promotion (IITP) Grant through the Korean Government (MSIT),
(Development of lattice-based post-quantum public-key cryptographic schemes),
under Grant 2017-0-00616.}

\address{Department of Mathematics, The University of Hong Kong, Hong Kong}
\email{pdy1016@hku.hk}
\thanks{}






\title[real quadratic fields admitting
universal lattices] {real quadratic fields admitting universal
lattices of rank 7}

\begin{abstract}
In this paper, we will prove that if $d$ is sufficiently large
square-free positive rational integer, then there is no
positive definite universal
quadratic $\mathcal{O}_F$-lattice of rank 7 where $F=\mathbb
Q(\sqrt{d})$.
\end{abstract}

\maketitle

\section{Introduction}

\vskip 0.3cm

Let $F$ be a totally real number
field and let $\mathcal O_F$ be the ring of algebraic integers of
$F$. A {\it quadratic $\mathcal O_F$-lattice} is a finitely
generated $\mathcal O_F$-module $L$ equipped with a quadratic map
$Q:L \rightarrow \mathcal O_F$, for which
$$B(x,y):=\frac{Q(x+y)-Q(x)-Q(y)}{2} \quad \forall x,y\in L$$ is symmetric
bilinear form on $L$ and $B(x,x)=Q(x)$. $L$ is said to be {\it
classic} if $B(x,y)\in \mathcal O_F$ for all $x,y\in L$, and {\it
non-classic}, otherwise. \vskip 0.1cm

A quadratic $\mathcal O_F$-lattice
$L$ is said to be {\it positive definite} if $Q(x)\in \mathcal
O_F^+$ for all $x\in L\setminus \{0\}$, where $\mathcal
O_F^+(\subset \mathcal O_F)$ is the set of all totally positive
algebraic integers of $F$. A positive definite quadratic
$\mathcal{O}_F$-lattice $L$ is said to be {\it universal} if it
represents every element of $\mathcal O_F^+$, i.e., if
$Q(L)=\mathcal O_F^+ \cup \{0\}$. For example, the famous Maass'
three square theorem \cite{m} states that the sum of three squares
over $\mathcal O_F$ for $F=\mathbb Q(\sqrt{5})$, which corresponds
to the positive definite quadratic $\mathcal O_F$-lattice
$L=\mathcal O_F \times \mathcal O_F \times \mathcal O_F$ equipped
with the quadratic map $Q(x_1,x_2,x_3)=x_1^2+x_2^2+x_3^2$, is
universal. \vskip 0.1cm

Universality is one of the most
fascinating topics in the arithmetic theory of quadratic lattices
and has been studied in depth for a long time. In 1993, Conway and
Schneeberger announced a celebrated finiteness theorem on universal
quadratic $\mathbb Z$-lattices, known as the {\it 15-Theorem} (see
\cite{CS}), which states that the universality of a classic positive
definite quadratic $\mathbb Z$-lattice is determined by the
representability of positive rational integers $1,2,\dots,15$.
Bhargava and Hanke \cite{290} later proved the {\it 290-theorem}
which states that a non-classic positive definite quadratic $\mathbb
Z$-lattice is universal if it represents all positive rational
integers up to $290$. \vskip 0.1cm

According to Hsia, Kitaoka and
Kneser \cite{HKK}, we may conclude that for any totally positive
real number field $F$, there always exist positive definite
universal $\mathcal {O}_F$-lattices. A very interesting and
meaningful question is what the minimal rank of positive definite
universal $\mathcal {O}_F$-lattices is for a given $F$. As an
example, it is well known that $4$ is the minimal rank of classic positive definite
universal $\mathcal {O}_F$-lattices if $F=\mathbb
Q$ and $3$ if $F=\mathbb Q(\sqrt{2}),\mathbb Q(\sqrt{3}),\mathbb
Q(\sqrt{5})$. In the former case, all classic positive definite universal
$\mathbb Z$-lattices of rank 4 are listed by Bhargava \cite{CS}. In the latter case, all classic positive
definite universal $\mathcal {O}_F$-lattices of rank 3 are listed by
Chan, M.-H. Kim and Raghavan \cite{CKR}. 
There are some results that minimal rank of positive definite universal quadratic form of a totally real number field could be arbitrarily large.
That was shown that for a given $n \in \mathbb N$, there are infinitely many real quadratic number fields \cite{BK}, multiquadratic number fields \cite{KS}, and cubic fields \cite{Y} that do not admit positive definite universal quadratic forms (possibly non-classic) of rank $n$ by Kala, Kala-Svoboda, and Yatsyna, respectively.
By covering all of these results, Kala \cite{K} recently proved that for a given $n,d \in \mathbb N$ with $2|d$ or $3|d$, there are infinitely many real number fields $F$ with $[F:\mathbb Q]=d$ that do not admit positive definite universal quadratic forms (possibly non-classic) of rank $n$.

\vskip 0.1cm

On the other hand, it is quite
natural to ask for a given positive integer $n$, how many totally
real number fields $F$ are there which admit a positive definite
universal $\mathcal {O}_F$-lattice of rank $n$. Kitaoka conjectured
that there are only finitely many totally real number fields $F$
which admit a positive definite universal quadratic form in 3
variables (i.e., free quadratic lattice of rank 3) over $\mathcal
O_F$, in a private seminar. The conjecture was partly answered by
Chan, M.-H. Kim and Raghavan \cite{CKR}. They proved that among real
quadratic number fields, only $F=\mathbb Q(\sqrt{2}), \mathbb
Q(\sqrt{3})$ and $\mathbb Q(\sqrt{5})$ admit a classic positive definite
universal $\mathcal {O}_F$-lattice of rank 3. Kala and Yatsyna
\cite{KY} showed that $\mathbb Q$ and $\mathbb Q(\sqrt{5})$ are the only totally real
number fields which admit a
positive definite universal
quadratic form in 3 (besides 4 and 5, too) variables whose coefficients are all rational
integers. 
Kr\'{a}sensk\'{y}, Tinkov\'{a}, Zemkov\'{a}
\cite{KTZ} proved that there is no biquadratic totally real number
field which admits a classic positive
definite universal quadratic form in 3 variables. All of these
are supporting evidences of Kitaoka's Conjecture. \vskip 0.1cm

In 2000, B. M. Kim \cite{octonary}
proved that there are infinitely many real quadratic number fields
$F$ which admit a positive definite universal free quadratic
$\mathcal{O}_F$-lattice of rank 8. But it was not known whether 8 is
the minimal rank with this property. He proved earlier \cite{d.s}
that there are only finitely many real quadratic number fields $F$
which admit a positive definite diagonal ('diagonal' means having no cross terms) universal free quadratic
$\mathcal{O}_F$-lattice of rank 7. \vskip 0.1cm

In this paper, we prove that there
exist only finitely many real quadratic number fields $F$ which
admit a positive definite universal quadratic
$\mathcal{O}_F$-lattice of $\rank 8$. This implies that 8 is indeed the minimal
rank $n$ that guarantees the infinitude of real quadratic number
fields $F$ which admit a positive definite universal quadratic
$\mathcal{O}_F$-lattice of rank $n$. 
 \vskip 5em

\section{Lemmas}
\vskip 0.3cm

\begin{lem} \label{0} Let
$L$ be a positive definite $\mathcal O_F$-lattice and let $\begin{pmatrix}B(v_i,
v_j)\end{pmatrix}_{n\times n}$ be the Gram-matrix of given vectors
$v_1, v_2, \cdots, v_n \in L$. Then

\noindent $(1)$ \ $\begin{pmatrix}B(v_i, v_j)\end{pmatrix}$ is
positive semi-definite\,;

\noindent $(2)$ \ $\det \begin{pmatrix}B(v_i, v_j)\end{pmatrix}>0$
if and only if $v_1, v_2, \cdots, v_n$ are linearly independent\,;

\noindent $(3)$ \ $\det \begin{pmatrix}B(v_i, v_j)\end{pmatrix}=0$
if and only if $v_1, v_2, \cdots, v_n$ are linearly dependent.
\end{lem}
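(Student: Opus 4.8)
The plan is to pass to the real completions of $F$ and thereby reduce $(1)$–$(3)$ to classical facts about positive definite real symmetric bilinear forms. Write $V=FL=L\otimes_{\mathcal O_F}F$ for the $F$-vector space spanned by $L$, carrying the $F$-bilinear extension of $B$, and for each real embedding $\sigma\colon F\hookrightarrow\mathbb R$ put $V_\sigma=V\otimes_{F,\sigma}\mathbb R$ with its extended bilinear form $B_\sigma$ and quadratic form $Q_\sigma$. The key preliminary step is the claim that $Q_\sigma$ is positive definite on $V_\sigma$ for \emph{every} $\sigma$; granting this, the three assertions follow by applying the corresponding real statements one embedding at a time.

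To prove the claim, first note that $V$ is totally positive definite over $F$, i.e. $Q(v)\in F^+$ for all $v\in V\setminus\{0\}$: writing $v=y/s$ with $0\neq y\in L$ and $0\neq s\in\mathcal O_F$ (a common denominator), one has $Q(v)=Q(y)/s^2$, the quotient of the totally positive element $Q(y)$ by the totally positive element $s^2$. Now suppose $Q_\sigma$ is not positive definite for some $\sigma$. Since a real symmetric form is positive definite exactly when it is positive semidefinite and nondegenerate, either (a) $B_\sigma$ is degenerate, in which case the determinant of the Gram matrix $\begin{pmatrix}B(e_i,e_j)\end{pmatrix}$ of $B$ in any $F$-basis $e_1,\dots,e_m$ of $V$ is sent to $0$ by $\sigma$, hence is already $0$ as $\sigma$ is injective on $F$, so $B$ has a nonzero radical vector $v$ with $Q(v)=B(v,v)=0$ — contradicting total positivity; or (b) $Q_\sigma(w)<0$ for some $w\in V_\sigma$, and then, because $\sigma(F)$ contains $\mathbb Q$ and is therefore dense in $\mathbb R$, the image of $V$ is dense in $V_\sigma$, so continuity of $Q_\sigma$ yields a $v\in V$ with $\sigma(Q(v))=Q_\sigma(v)<0$ — again contradicting total positivity. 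Hence $Q_\sigma$ is positive definite for all $\sigma$.

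With this in hand, set $M=\begin{pmatrix}B(v_i,v_j)\end{pmatrix}$. For each $\sigma$, $\sigma(M)$ is precisely the Gram matrix with respect to $B_\sigma$ of the vectors $v_1\otimes1,\dots,v_n\otimes1$ inside the positive definite real space $V_\sigma$. Standard real linear algebra then gives: $\sigma(M)$ is positive semidefinite, which is $(1)$ (equivalently, $(1)$ is the plain fact that $c^{t}Mc=Q(\sum c_iv_i)$ is totally nonnegative for every $c\in F^n$); and $\sigma(M)$ is positive definite — i.e. $\det\sigma(M)>0$ — exactly when $v_1\otimes1,\dots,v_n\otimes1$ are $\mathbb R$-linearly independent, which, since extension of scalars preserves linear independence, happens exactly when $v_1,\dots,v_n$ are $F$-linearly independent (equivalently, $\mathcal O_F$-linearly independent, after clearing denominators). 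For the remaining directions I would invoke only the formal observation that a relation $\sum c_iv_i=0$ with $c=(c_i)\in F^n$ forces $Mc=0$, hence $\det M=0$: thus if $v_1,\dots,v_n$ are dependent then $\det M=0$, while if they are independent then $\det\sigma(M)>0$ for every $\sigma$, i.e. $\det M$ is totally positive. Since ``dependent'' and ``independent'' are exhaustive and a totally positive element is nonzero, this yields the biconditionals $(2)$ and $(3)$ simultaneously (and, incidentally, shows $\det M$ is always either $0$ or totally positive, refining $(1)$).

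The only part with genuine content is the preliminary claim, and within it the single point where positive definiteness over $\mathcal O_F$ — a condition on \emph{totally positive} values — is upgraded to honest archimedean positivity; the density of $\mathbb Q$, hence of $\sigma(F)$, in $\mathbb R$ together with the positive-semidefinite-versus-degenerate alternative is exactly what makes that upgrade go through. (One can, if desired, bypass $Q_\sigma$ entirely for the ``$\det M\neq0$'' half of $(2)$ and all of $(3)$: if $\det M=0$ then $M$ has a nonzero kernel vector $c\in F^n$, and then $w=\sum c_iv_i$ satisfies $Q(w)=\sum_j c_jB(v_j,w)=0$, so $w=0$ by positive definiteness, exhibiting a nontrivial relation.) Everything else is base change plus textbook real linear algebra, with the one minor subtlety that $L$ need not be free — handled by the ``common denominator into $L$'' device. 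I therefore anticipate no real obstacle: the lemma is foundational and the work lies entirely in setting up the reduction.
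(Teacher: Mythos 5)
Your argument is correct and complete; note, however, that the paper offers no proof to compare against --- it explicitly skips this lemma as well known. Your route (pass to $V=L\otimes_{\mathcal O_F}F$, show $Q$ is totally positive on $V\setminus\{0\}$ via the common-denominator device, then use density of $\sigma(F)$ in $\mathbb R$ to get genuine positive definiteness of each real completion $V_\sigma$, and finish with classical Gram-matrix linear algebra) is the standard one, and your reading of ``positive semi-definite'' and ``$\det>0$'' as \emph{totally} positive semi-definite and \emph{totally} positive is consistent with how the lemma is invoked in Lemma 2.6 and Theorem 3.1. The only points you gloss over --- that positive definiteness forces $L$ to be torsion-free so that $L$ embeds in $V$, and that $F$-linear and $\mathcal O_F$-linear independence coincide --- are both immediate, so I see no gap.
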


We skip the proof of Lemma 2.1,
which is quite well-known. We need the following technical lemma.

\begin{lem}\label{1}Let $A:=(a_{ij})
\in M_k(\mathbb R),
B:=(b_{ij}):=\begin{pmatrix} B_1 & B_2 \\ B_3 & B_4 \end{pmatrix}
\in M_{k+s}(\mathbb R)$ where $B_1 \in M_k(\mathbb R)$, $B_2,B_3^T
\in M_{k \times s}(\mathbb R)$ and $B_4 \in M_s(\mathbb R)$ with
$|a_{ij}|, |b_{ij}| \le N$ where $N>1$. Then for $x>0$,
$$\begin{array}{l}
\det(A)\cdot\det(B_4)\cdot x^k-k! \cdot (k+s)!\cdot
N^{k+s}\cdot (x^{k-1}+\cdots+x+1)\\
<\det(A)\cdot\det(B_4)\cdot x^k-N^{k+s}\cdot \sum \limits_{l=0}^{k-1}
\frac{k!}{(k-l)!\cdot l!} \cdot \frac{k!\cdot(k+s-l)!}{(k-l)!}\cdot x^l\\
 \le \det \left(
\begin{pmatrix} A & O_{k\times s} \\ O_{s \times k} & O_{s \times s}
\end{pmatrix}x +
B \right)\\
\le \det(A)\cdot\det(B_4)\cdot x^k+N^{k+s}\cdot \sum \limits_{l=0}^{k-1}
\frac{k!}{(k-l)!\cdot l!} \cdot \frac{k!\cdot(k+s-l)!}{(k-l)!}\cdot x^l\\
<\det(A)\cdot\det(B_4)\cdot x^k+k! \cdot (k+s)!\cdot N^{k+s}\cdot
(x^{k-1}+\cdots+x+1)\end{array}$$
holds, where $O_{k\times s} \in
M_{k\times s}(\mathbb R), O_{s \times k}\in M_{s \times k}(\mathbb
R),$ and  $O_{s \times s}\in M_{s}(\mathbb R)$ are zero matrices and
$\det(B_4):=1$ when $s=0$.
\end{lem}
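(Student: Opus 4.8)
The plan is to treat the determinant in the statement as a polynomial in $x$, to compute its leading coefficient exactly, and to estimate each lower coefficient by counting nonvanishing minors. Write
$$M(x):=\begin{pmatrix} A & O_{k\times s} \\ O_{s\times k} & O_{s\times s}\end{pmatrix}x+B=\begin{pmatrix} Ax+B_1 & B_2 \\ B_3 & B_4\end{pmatrix},$$
so that $P(x):=\det M(x)$ has degree at most $k$. Only the first $k$ rows of $M(x)$ depend on $x$, the $i$-th of them being $x\cdot(a_{i1},\dots,a_{ik},0,\dots,0)$ added to the $i$-th row of $B$; expanding $\det M(x)$ by multilinearity in these $k$ rows gives $P(x)=\sum_{S\subseteq\{1,\dots,k\}}x^{|S|}\det R_S$, where $R_S$ arises from $B$ by replacing, for each $i\in S$, its $i$-th row by $(a_{i1},\dots,a_{ik},0,\dots,0)$. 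For $S=\{1,\dots,k\}$ the matrix $R_S=\begin{pmatrix} A & O_{k\times s} \\ B_3 & B_4\end{pmatrix}$ is block triangular, so the coefficient of $x^k$ in $P(x)$ is precisely $\det(A)\cdot\det(B_4)$; the convention $\det(B_4):=1$ makes this correct also when $s=0$.

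Next I would bound $\det R_S$ for $|S|=l$ with $0\le l\le k-1$. Each row of $R_S$ indexed by some $i\in S$ vanishes on the last $s$ columns, so in the Laplace expansion of $\det R_S$ along those columns only the $s\times s$ minors whose row set avoids $S$ survive; there are $\binom{k+s-l}{s}$ such minors, each of absolute value at most $s!\,N^s$, and each is multiplied by a complementary $k\times k$ minor of absolute value at most $k!\,N^k$. Hence $|\det R_S|\le\binom{k+s-l}{s}\,s!\,N^s\cdot k!\,N^k=\frac{(k+s-l)!}{(k-l)!}\,k!\,N^{k+s}$. Since there are $\binom{k}{l}=\frac{k!}{(k-l)!\,l!}$ sets $S$ of size $l$, the coefficient of $x^l$ in $P(x)-\det(A)\det(B_4)\,x^k$ has absolute value at most $\frac{k!}{(k-l)!\,l!}\cdot\frac{k!\,(k+s-l)!}{(k-l)!}\cdot N^{k+s}$, and summing these over $l=0,\dots,k-1$ (using $x^l>0$ since $x>0$) gives the two middle inequalities.

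The outer inequalities reduce to the coefficientwise bound $\frac{k!}{(k-l)!\,l!}\cdot\frac{k!\,(k+s-l)!}{(k-l)!}\le k!\,(k+s)!$, i.e. $\binom{k}{l}\cdot\frac{(k+s-l)!}{(k-l)!}\le (k+s)!$ for $0\le l\le k-1$: setting $m:=k-l\ge1$ this reads $\binom{k}{m}\le m!\prod_{j=m+s+1}^{k+s}j$, whose right side is nondecreasing in $s$ and equals $k!\ge\binom{k}{m}$ at $s=0$, with strict inequality for $l=0$ once $k\ge2$; multiplying by $x^l>0$ and summing then gives the strict bounds by $k!\,(k+s)!\,N^{k+s}(x^{k-1}+\cdots+x+1)$. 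I expect the one genuine point requiring care to be the minor-counting in the Laplace step — identifying exactly which $s\times s$ minors are killed by the zero block, counting the survivors as $\binom{k+s-l}{s}$, and deducing $|\det R_S|\le\frac{(k+s-l)!}{(k-l)!}\,k!\,N^{k+s}$ — while the leading-term computation, the summation, and the degenerate case $s=0$ (empty Laplace expansion, $\det(B_4):=1$) are routine.
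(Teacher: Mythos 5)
Your proposal is correct and is essentially the paper's argument in a cleaner dress: the paper expands the determinant directly as a double sum over permutations and counts the surviving terms, which yields exactly your bound $\binom{k}{l}\cdot\frac{k!\,(k+s-l)!}{(k-l)!}\cdot N^{k+s}$ on the coefficient of $x^l$, while you reach the same count via multilinearity in the first $k$ rows plus Laplace expansion along the last $s$ columns. Your handling of the outer (strict) inequality is in fact slightly more careful than the paper's, which asserts strictness coefficientwise even in the boundary cases where equality holds.
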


\begin{proof}
Note that for an $n\times n$ matrix $E=(e_{ij})$, its determinant is
defined by 
$$\det(E)=\sum_{\mathbf
i}sgn(\mathbf i) \cdot e_{1i_1}\cdots e_{ni_n}$$ where $\mathbf
i:=(i_1,\cdots,i_n)$ runs through
all $n$-permutations and $sgn(\mathbf i)$ is the parity of $\mathbf
i$. From the definition of determinant of matrix, 
by arranging the determinant as descending order with respect to $x$,
we have that
$$\det \left(
\begin{pmatrix} A & O_{k\times s} \\ O_{s \times k} & O_{s \times s}
\end{pmatrix}x +
B \right) =d_kx^k+d_{k-1}x^{k-1}+\cdots+d_1x+d_0$$ where
\begin{equation} \label{a_l}
d_l=\sum_{\mathbf i,\,\mathbf j} sgn(\mathbf i) \cdot sgn(\mathbf j)
\cdot a_{i_{1}j_{1}}\cdots a_{i_{l}j_{l}}b_{i_{l+1}j_{l+1}}\cdots
b_{i_{n}j_{n}}
\end{equation}
where $\mathbf i := (i_1,\cdots,i_{k+s})$ and $\mathbf j :=
(j_1,\cdots,j_{k+s})$ run through all
$(k+s)$-permutations with $i_1<\cdots<i_l \le k$,
$i_{l+1}<\cdots<i_{k+s}$, and $j_1,\cdots, j_l \le k$.
Since the number of permutations
$\mathbf i$ satisfying $i_1<\cdots<i_l \le k$ and
$i_{l+1}<\cdots<i_{k+s}$ is $\frac{k!}{(k-l)! \cdot l!}(<k!)$
and the number of permutations
$\mathbf j$ satisfying $j_1,\cdots, j_l \le k$ is $\frac{k!}{(k-l)! \cdot
l!}\cdot l! \cdot (k+s-l)!=\frac{(k+s-l)!k
!}{(k-l)!}=(k+s-l)\cdots(k+1-l)\cdot k!(\le (k+s)!)$, the total number of
terms of the summation of (\ref{a_l}) is $\frac{k!}{(k-l)! \cdot l!}
\cdot \frac{k ! \cdot (k+s-l)!}{(k-l)!}(< k! \cdot (k+s)!)$. And the
absolute value $|a_{i_{1}j_{1}}\cdots
a_{i_{l}j_{l}}b_{i_{l+1}j_{l+1}}\cdots b_{i_{n}j_{n}}|$ of each term
could not exceed $N^{k+s}$, so we have $|d_l| \le
\frac{k!}{(k-l)!\cdot l!} \cdot \frac{k!\cdot(k+s-l)!}{(k-l)!}\cdot
N^{k+s} < k! \cdot (k+s)! \cdot N^{k+s}$ for all $1 \le l \le k$.
Moreover, we may see $d_k=\det(A)\cdot\det(B_4)$. This completes the
proof.
\end{proof}
\vskip 0.3cm

From here on, we assume that $F=\mathbb Q(\sqrt{d})$ is a real
quadratic number field where $d>1$ is a square free positive
integer. 
We write the discriminant of $F=\mathbb Q(\sqrt{d})$ as $\Delta_d$, i.e.,
$$\Delta_d=\left\{\begin{array}{ll} 4d \quad &\mbox{when} \ \
d\equiv 2, 3 \pmod{4},\\ d \quad &\mbox{when} \ \ d\equiv
1\quad \pmod{4}.\end{array}\right.$$
For any $z=x+y\sqrt{d}\in F$ with $x,y\in\mathbb Q$, we
denote its conjugate by $\overline{z}:=x-y\sqrt{d}$ and put
$$\omega_d:=\left\{\begin{array}{ll} \sqrt{d} \quad &\mbox{if} \ \
d\equiv 2, 3 \pmod{4},\\ \frac{1+\sqrt{d}}{2} \quad &\mbox{if} \ \ d\equiv
1\quad \pmod{4}.\end{array}\right.$$ It is well known that every
algebraic integer $\alpha\in\mathcal{O}_F$ can be written in the form
$$\alpha=a+b\omega_d$$
where $a,b\in\mathbb Z$, namely, $\{1, \omega_d\} $ is a $\z$-basis of $\mathcal O_F$.

\vskip 0.3cm

\begin{lem}\label{2}
For $\alpha, \beta \in\mathcal{O}_F \setminus\mathbb Z$ with $\alpha \in
\mathcal{O}^+_F$, we have
${\rm tr}(\alpha)\ge
\sqrt{\Delta_d}$ \,and\,\ ${\rm tr}(\beta^2)\ge \frac{\Delta_d}{2}$.
\end{lem}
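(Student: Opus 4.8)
The plan is to work with the explicit $\mathbb Z$-basis $\{1,\omega_d\}$ of $\mathcal O_F$ and compute traces directly. Write $\alpha=a+b\omega_d$ with $a,b\in\mathbb Z$ and $b\neq 0$ (since $\alpha\notin\mathbb Z$). The conjugate is $\overline\alpha=a+b\overline{\omega_d}$, so ${\rm tr}(\alpha)=2a+b\,{\rm tr}(\omega_d)$. In the case $d\equiv 2,3\pmod 4$ we have $\omega_d=\sqrt d$, ${\rm tr}(\omega_d)=0$, and $\sqrt{\Delta_d}=2\sqrt d$; in the case $d\equiv 1\pmod 4$ we have $\omega_d=\tfrac{1+\sqrt d}{2}$, ${\rm tr}(\omega_d)=1$, and $\sqrt{\Delta_d}=\sqrt d$.

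For the bound on ${\rm tr}(\alpha)$: since $\alpha\in\mathcal O_F^+$, both $\alpha>0$ and $\overline\alpha>0$, so $N(\alpha)=\alpha\overline\alpha>0$, and in fact $N(\alpha)\ge 1$ because $N(\alpha)\in\mathbb Z$ (a nonzero integer norm has absolute value at least $1$, and here it is positive). By the AM--GM inequality, ${\rm tr}(\alpha)=\alpha+\overline\alpha\ge 2\sqrt{\alpha\overline\alpha}=2\sqrt{N(\alpha)}\ge 2$. To upgrade $2$ to $\sqrt{\Delta_d}$ I would instead compute $N(\alpha)$ and ${\rm tr}(\alpha)$ explicitly and use that $\alpha\notin\mathbb Z$ forces $b\neq0$. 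Concretely, ${\rm tr}(\alpha)^2-4N(\alpha)=(\alpha-\overline\alpha)^2=b^2(\omega_d-\overline{\omega_d})^2=b^2\Delta_d$ (since $(\omega_d-\overline{\omega_d})^2$ equals $4d=\Delta_d$ when $d\equiv2,3$ and equals $d=\Delta_d$ when $d\equiv1$). Hence ${\rm tr}(\alpha)^2=4N(\alpha)+b^2\Delta_d\ge 4+\Delta_d>\Delta_d$, but this only gives ${\rm tr}(\alpha)>\sqrt{\Delta_d}$, which is in fact what is claimed (the inequality is strict in spirit; I would state it as ${\rm tr}(\alpha)\ge\sqrt{\Delta_d}$ which certainly follows). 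Since $\alpha$ is totally positive ${\rm tr}(\alpha)>0$, so ${\rm tr}(\alpha)=\sqrt{4N(\alpha)+b^2\Delta_d}\ge\sqrt{\Delta_d}$.

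For the bound on ${\rm tr}(\beta^2)$: write $\beta=c+e\omega_d$ with $e\neq 0$. Then ${\rm tr}(\beta^2)=\beta^2+\overline\beta^{\,2}=({\rm tr}\,\beta)^2-2N(\beta)$. Using ${\rm tr}(\beta)^2-4N(\beta)=e^2\Delta_d$ as above, we get ${\rm tr}(\beta^2)=({\rm tr}\,\beta)^2-2N(\beta)=\tfrac12\big(({\rm tr}\,\beta)^2+({\rm tr}\,\beta)^2-4N(\beta)\big)=\tfrac12\big(({\rm tr}\,\beta)^2+e^2\Delta_d\big)\ge\tfrac12 e^2\Delta_d\ge\tfrac{\Delta_d}{2}$, using $e\neq 0$, i.e. $e^2\ge 1$. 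That gives exactly the claimed bound.

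The computation is essentially routine; the only point requiring a little care is the separate treatment of the two congruence classes of $d$ modulo $4$ and the verification that in both cases $(\omega_d-\overline{\omega_d})^2=\Delta_d$, which is what makes the discriminant appear. I would also remark why $\beta\in\mathcal O_F\setminus\mathbb Z$ (without a positivity hypothesis on $\beta$) still suffices: $\beta^2$ need not be totally positive a priori, but ${\rm tr}(\beta^2)=\tfrac12(({\rm tr}\,\beta)^2+e^2\Delta_d)$ is manifestly positive and bounded below by $\Delta_d/2$ regardless. No serious obstacle is expected here; this lemma is a warm-up providing the arithmetic lower bounds on traces that will later force the entries of Gram matrices of universal lattices to be large, feeding into Lemma~\ref{1}.
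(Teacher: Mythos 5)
Your proof is correct and is essentially the same elementary computation as the paper's: both work in the $\mathbb Z$-basis $\{1,\omega_d\}$ and use that $\alpha\notin\mathbb Z$ forces the $\omega_d$-coefficient to be nonzero. The only cosmetic difference is that you package the calculation via the identities ${\rm tr}(\gamma)^2-4N(\gamma)=b^2\Delta_d$ and ${\rm tr}(\beta^2)=({\rm tr}\,\beta)^2-2N(\beta)$, which handles both congruence classes of $d$ at once, whereas the paper writes out the case $d\equiv 2,3\pmod 4$ explicitly and leaves $d\equiv 1\pmod 4$ as "similar."
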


\begin{proof} 
We only prove only for $d \equiv 2,3 \pmod 4$.
One may easily see for $d \equiv 1 \pmod 4$ similarly with this proof.
Since $\alpha \in \mathcal{O}^+_F \setminus \mathbb Z$ is a form of
$a+b\sqrt d$ where $a, b\in \mathbb Z \setminus \{0\}$ with $a>|b \sqrt
d|$, we have that ${\rm tr}(\alpha)=2a>2|b \sqrt d|\ge 2\sqrt d$. Since
$\beta \in \mathcal{O}_F \setminus \mathbb Z$ is a form of $a+b\sqrt
d$ where $a, b(\not=0) \in \mathbb Z$, we have
$\beta^2=(a^2+b^2\cdot d)+ 2ab\sqrt d$ and so ${\rm
tr}(\beta^2)=2(a^2+b^2\cdot d) \ge 2d$.
 \end{proof}

\vskip 0.3cm

\begin{lem} \label{4}
Let $F=\mathbb Q(\sqrt{d})$ satisfying its discriminant
$$\Delta_d>4 \cdot 15^2.$$
Let
$L$ be a positive definite classic $\mathcal{O}_F$-lattice and let $$ S:=\{v
\in L \,|\, Q(v)\in \mathbb Z \text{ with } 1\le Q(v)\le 15\}.$$
Then the $\mathbb Z$-submodule $L_1$ of $L$ generated by $S$ is a
positive definite $\mathbb Z$-lattice, i.e., $B(L_1,L_1) \subseteq \z$.
\end{lem}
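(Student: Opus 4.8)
The plan is to reduce everything to a single estimate on one algebraic integer and then invoke Lemma~\ref{2}. Since $B$ is $\mathbb Z$-bilinear and $L_1$ is generated over $\mathbb Z$ by $S$, it suffices to prove that $B(u,v)\in\mathbb Z$ for every pair $u,v\in S$; granting this, $Q$ restricted to $L_1$ is $\mathbb Z$-valued, positive definiteness of $L$ forces $Q(x)>0$ for every nonzero $x\in L_1$, and $L_1$ is a finitely generated torsion-free (hence free) $\mathbb Z$-module, so $L_1$ is a positive definite $\mathbb Z$-lattice.

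So fix $u,v\in S$ and suppose, for contradiction, that $B(u,v)\notin\mathbb Z$. Since $L$ is classic we have $B(u,v)\in\mathcal O_F$, hence $B(u,v)\in\mathcal O_F\setminus\mathbb Z$. Applying Lemma~\ref{0}(1) to the $2\times2$ Gram matrix of $u$ and $v$: it is positive semi-definite, so its determinant is non-negative, i.e. $B(u,v)^2\le Q(u)\,Q(v)$. Because $L$ is positive definite at \emph{both} real places of $F$, the same reasoning applies after conjugation and gives $\overline{B(u,v)}^{\,2}\le\overline{Q(u)}\,\overline{Q(v)}=Q(u)\,Q(v)$, where the last equality uses $Q(u),Q(v)\in\mathbb Z$. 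Adding these two inequalities and using $1\le Q(u),Q(v)\le 15$ yields
$${\rm tr}\big(B(u,v)^2\big)=B(u,v)^2+\overline{B(u,v)}^{\,2}\le 2\,Q(u)\,Q(v)\le 2\cdot 15^2.$$

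On the other hand, since $B(u,v)\in\mathcal O_F\setminus\mathbb Z$, Lemma~\ref{2} gives ${\rm tr}\big(B(u,v)^2\big)\ge\Delta_d/2$, and the hypothesis $\Delta_d>4\cdot 15^2$ forces ${\rm tr}\big(B(u,v)^2\big)>2\cdot 15^2$, contradicting the displayed inequality. Hence $B(u,v)\in\mathbb Z$ for all $u,v\in S$, which is exactly what was needed. I do not anticipate a real obstacle here; the only point that deserves care is that ``positive definite'' must be read as ``totally positive definite'', so that Lemma~\ref{0} may be applied under both embeddings of $F$ — this is precisely what makes the conjugate estimate $\overline{B(u,v)}^{\,2}\le Q(u)\,Q(v)$ available, and without it the trace of $B(u,v)^2$ could not be controlled.
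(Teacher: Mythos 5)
Your proof is correct and follows essentially the same route as the paper: bound ${\rm tr}(B(u,v)^2)$ above by ${\rm tr}(Q(u)Q(v))\le 2\cdot 15^2$ using total positive semi-definiteness of the $2\times 2$ Gram matrix, then contradict the lower bound ${\rm tr}(\beta^2)\ge\Delta_d/2>2\cdot 15^2$ from Lemma~\ref{2}. Your explicit remark that both real embeddings must be used is exactly the point the paper compresses into the phrase ``either $0$ or a totally positive integer.''
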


\begin{proof}
For all $v,w\in S$, $Q(v)Q(w)-B(v,w)^2$ is either $0$ or a
totally positive integer since $L$ is positive definite.
So we have that \[{\rm tr}(B(v,w)^2)\le{\rm
tr}(Q(v)Q(w))\le2\cdot15^2.\]
On the other hand, by Lemma \ref{2},
\[{\rm tr}(\beta^2)>2\cdot15^2\]
for $\beta \in \mathcal O_F \setminus \z$.
So we obtain that $B(v,w) \in \z$ for any $v,w \in S$,
which implies that $B(L_1,L_1) \subseteq \z$.
And the positive definiteness of $L_1$ follows from that of $L$.
This completes the proof.
\end{proof}
\vskip 0.3cm

\begin{rmk} \label{rmk2.5}
Let $F=\mathbb Q(\sqrt{d})$ satisfying its discriminant
$$\Delta_d>4 \cdot 15^2.$$ 
Let $L$ be a positive
definite universal $\mathcal{O}_F$-lattice.
By the universality of $L$, we may
take a vector $v_n \in L$ with $Q(v_n)=n$ for each integer $n$ from
$1$ up to $15$. Then $\begin{pmatrix} B(v_i,v_j)
\end{pmatrix}_{15\times15}$ is a symmetric positive semi-definite matrix in $M_{15 \times 15}(\z)$
by Lemma \ref{4}. According to the {\it 15-Theorem}, the
$\mathbb Z$-sublattice $L_1$ of $L$ generated by
$\{v_1,v_2,\dots,v_{15}\}$ is universal over $\z$. Since the minimal rank of a
positive definite universal $\mathbb Z$-lattice is $4$ (the fact that there is no ternary universal quadratic $\z$-lattice is easily induced in virtue of the Hilbert Reciprocity Law), we can find four linearly independent vectors
$v_1',v_2',v_3',v_4' \in \{v_1,v_2,\dots,v_{15}\}\subset L_1$ .
\end{rmk}

\begin{lem} \label{9}
Let $F=\mathbb Q(\sqrt{d})$ 
satisfying its discriminant
$$\Delta_d>(4\cdot 4!\cdot 4!\cdot15^4)^2,$$
and $L$ be a positive definite classic universal
$\mathcal{O}_F$-lattice. For $k=1,2,\dots, 15$, let $v_k \in L$ with
$$Q(v_k)=m_k+k \omega _d \in \mathcal O_F^+$$ where
$m_k=-\lfloor k\overline{\omega_d}\rfloor$, and let $L_2$ be the
$\mathcal{O}_F$-sublattice of $L$ generated by $\{v_1, v_2, \cdots ,
v_{15}\}$. Then $rank(L_2)\ge 4$.

Moreover, there are four
linearly independent vectors
$$v_1',\cdots,v_4' \in \{v_1,\cdots, v_{15}\}$$
satisfying
\begin{equation} \label{2.3}
\det
\begin{pmatrix}
(a_{ij}')
_{4 \times 4}
\end{pmatrix}>0
\end{equation}
where
$\begin{pmatrix}
B({v_i'},{v_j'})
\end{pmatrix}_{4 \times 4} = \sqrt{\Delta_d}\begin{pmatrix}
{a_{ij}'}
\end{pmatrix}_{4 \times 4}+
\begin{pmatrix}
{\epsilon_{ij}'}
\end{pmatrix}_{4 \times4}$
with
$$\left\{
\begin{array}{rl} \begin{pmatrix}
a_{ij}'
\end{pmatrix}_{4 \times 4} \in M_{4}(\mathbb Z) \quad
&\mbox{with} \ \ |{a_{ij}}'|\le15,\\
\begin{pmatrix}
\epsilon_{ij}'
\end{pmatrix}_{4 \times 4} \in M_{4}(\mathbb R)
&\mbox{with} \ \
|{\epsilon_{ij}}'|<1.\end{array}\right.$$
\end{lem}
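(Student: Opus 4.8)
The plan is to extract the quantities $Q(v_k)$ explicitly and show that the Gram matrix of the $v_k$'s is, up to a factor $\sqrt{\Delta_d}$, an integer matrix with a small ``error'' term, then invoke the $15$-Theorem as in Remark~\ref{rmk2.5} to locate a nonsingular $4\times 4$ block. First I would note that for $d\equiv 2,3\pmod 4$ we have $\omega_d=\sqrt d$, $\overline{\omega_d}=-\sqrt d$, and $\sqrt{\Delta_d}=2\sqrt d$; for $d\equiv 1\pmod 4$, $\omega_d=\frac{1+\sqrt d}{2}$, $\overline{\omega_d}=\frac{1-\sqrt d}{2}$, and $\sqrt{\Delta_d}=\sqrt d$. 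In either case $m_k=-\lfloor k\overline{\omega_d}\rfloor$ is chosen precisely so that $Q(v_k)=m_k+k\omega_d$ is totally positive: the conjugate is $m_k+k\overline{\omega_d}=m_k-\lceil k\,|\overline{\omega_d}|\rceil$-type quantity lying in $(0,1)$ (one checks $0<m_k+k\overline{\omega_d}<1$ directly from the floor), while the element itself is large and positive, so such $v_k\in L$ exist by universality of $L$.

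Next I would analyze $B(v_i,v_j)$ for $i\ne j$. Since $L$ is positive definite, $Q(v_i)Q(v_j)-B(v_i,v_j)^2$ is either $0$ or totally positive, so ${\rm tr}\big(B(v_i,v_j)^2\big)\le {\rm tr}\big(Q(v_i)Q(v_j)\big)$. Writing $Q(v_i)=m_i+i\omega_d$ with $|m_i|$ bounded by roughly $k\sqrt d\le 15\sqrt d$, a direct estimate gives ${\rm tr}(Q(v_i)Q(v_j))<(4\cdot 4!\cdot 4!\cdot 15^4)$-type bound—more precisely I want ${\rm tr}(B(v_i,v_j)^2)$ small enough that, combined with Lemma~\ref{2}, the ``non-integer part'' of $B(v_i,v_j)$ is forced to be tiny. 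Concretely, write $B(v_i,v_j)=a_{ij}+b_{ij}\omega_d$ with $a_{ij},b_{ij}\in\mathbb Z$; then the coefficient of $\sqrt d$ in $B(v_i,v_j)$ has absolute value at most $\tfrac{1}{\sqrt d}\sqrt{{\rm tr}(B(v_i,v_j)^2)/2}$, which under the hypothesis $\Delta_d>(4\cdot 4!\cdot 4!\cdot 15^4)^2$ is bounded above by something like $\tfrac{15^2}{\sqrt{\Delta_d}}$, a small number. Rescaling, I would set $\sqrt{\Delta_d}\,a_{ij}'$ to be the contribution whose $\sqrt d$-coefficient matches that of $B(v_i,v_j)$ (so $a_{ij}'\in\mathbb Z$ with $|a_{ij}'|\le 15$, using $|b_{ij}|\le 15^2/\sqrt{d}\cdot\text{(const)}$ after the trace bound), and let $\epsilon_{ij}'$ absorb the remaining rational part, which then has $|\epsilon_{ij}'|<1$ by the same trace estimate; the diagonal entries $B(v_i,v_i)=Q(v_i)=m_i+i\omega_d$ split similarly with $a_{ii}'=i$ or a related small integer.

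For the rank claim and the positivity \eqref{2.3}: reduce the Gram matrix $\big(B(v_i,v_j)\big)_{15\times 15}$ modulo the ideal generated by $\sqrt d$ (equivalently, pass to the $\sqrt d$-coefficients scaled by $\sqrt{\Delta_d}$), obtaining an integer symmetric positive semidefinite $15\times 15$ matrix that is the Gram matrix of a $\mathbb Z$-lattice representing $1,2,\dots,15$—here I use that $m_k+k\omega_d$ reduces appropriately and Lemma~\ref{4}/Remark~\ref{rmk2.5} machinery applies. By the $15$-Theorem this $\mathbb Z$-lattice is universal, hence has rank $\ge 4$, so among $v_1,\dots,v_{15}$ there are four whose corresponding $4\times 4$ integer minor $\det(a_{ij}')$ is positive by Lemma~\ref{0}(2). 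These same four vectors are then linearly independent over $\mathcal O_F$, giving ${\rm rank}(L_2)\ge 4$ and \eqref{2.3}. The main obstacle I expect is bookkeeping the precise constants: verifying that the trace bound on $B(v_i,v_j)^2$ coming from $1\le i,j\le 15$ and $|m_k|\le 15\sqrt d$ really does force $|\epsilon_{ij}'|<1$ and $|a_{ij}'|\le 15$ under exactly the stated hypothesis $\Delta_d>(4\cdot 4!\cdot 4!\cdot 15^4)^2$, and handling the $d\equiv 1\pmod 4$ case where $\omega_d$ has a $\tfrac12$ in it so the integer/non-integer decomposition of $B(v_i,v_j)$ is slightly more delicate.
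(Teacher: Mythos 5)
Your overall architecture (decompose the Gram matrix as $\sqrt{\Delta_d}\,(a_{ij})+(\epsilon_{ij})$, then use the 15-Theorem and the non-existence of ternary universal $\mathbb Z$-forms to extract a positive definite $4\times4$ integer minor) matches the paper, but the central estimate is carried out incorrectly, and this is a genuine gap rather than bookkeeping. You bound $B(v_i,v_j)$ via the single inequality ${\rm tr}\bigl(B(v_i,v_j)^2\bigr)\le{\rm tr}\bigl(Q(v_i)Q(v_j)\bigr)$. Since $Q(v_i)Q(v_j)\approx ij\,\Delta_d$ in the identity embedding, this trace is of size $\approx 225\,\Delta_d$, so the inequality only tells you that \emph{both} $|B(v_i,v_j)|$ and $|\overline{B(v_i,v_j)}|$ are $O(\sqrt{\Delta_d})$. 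That is nowhere near enough to force $|\epsilon_{ij}'|<1$: the decomposition $B(v_i,v_j)=a_{ij}'\sqrt{\Delta_d}+\epsilon_{ij}'$ with $a_{ij}'\in\mathbb Z$ and $|\epsilon_{ij}'|<1$ is exactly the statement that the \emph{conjugate} $\overline{B(v_i,v_j)}$ has absolute value less than $1$, and the trace inequality irretrievably mixes the two embeddings. Your claim that the $\sqrt d$-coefficient of $B(v_i,v_j)$ is ``at most $15^2/\sqrt{\Delta_d}$, a small number'' is also backwards: that coefficient is (up to normalization) the integer $a_{ij}'$ itself, typically nonzero and bounded by $15$; it is the conjugate, not the $\sqrt d$-coefficient, that is small. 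The fix is to use the total positivity of $Q(v_i)Q(v_j)-B(v_i,v_j)^2$ in each embedding \emph{separately}: in the non-identity embedding $\overline{Q(v_i)}\,\overline{Q(v_j)}<1$ (because each $\overline{Q(v_k)}=m_k+k\overline{\omega_d}\in(0,1)$), hence $\overline{B(v_i,v_j)}^2<1$, which pins $B(v_i,v_j)$ to within distance $1$ of $a_{ij}'\sqrt{\Delta_d}$; the identity embedding then gives $|a_{ij}'|\le\sqrt{ij}\le 15$.

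A second, smaller gap: ``reducing the Gram matrix modulo $\sqrt d$'' does not by itself show that the integer matrix $(a_{ij})$ is positive semidefinite, nor that a positive $4\times4$ minor of $(a_{ij}')$ forces $\det\bigl(B(v_i',v_j')\bigr)>0$ and hence linear independence. Both directions require the quantitative perturbation estimate of Lemma \ref{1} with $x=\sqrt{\Delta_d}$: if some $k\times k$ minor of $(a_{ij})$ ($k\le4$) had negative determinant, the corresponding minor of the true Gram matrix would be negative once $\sqrt{\Delta_d}>4\cdot4!\cdot4!\cdot15^4$, contradicting Lemma \ref{0}; and conversely $\det(a_{ij}')\ge1$ plus Lemma \ref{1} gives $\det\bigl(B(v_i',v_j')\bigr)>0$. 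This is precisely where the explicit hypothesis on $\Delta_d$ enters, and your write-up never uses it.
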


\begin{proof}
From the fact that $L$ is positive definite, we have
\begin{equation}\label{B}
\begin{cases}
Q(v_i)\cdot Q(v_j)-B(v_i,v_j)^2 \ge 0\\
\overline{Q(v_i)}\cdot \overline{Q(v_j)} -\overline{B(v_i,v_j)}^2 \ge 0
\end{cases}
\end{equation}
for all $1 \le i,j \le 15$.
From (\ref{B}), we obtain that
\begin{equation}\label{B'}
\begin{cases}
B(v_i,v_j)^2 \le Q(v_i)\cdot Q(v_j)<(i\sqrt{\Delta_d}+1)(j\sqrt{\Delta_d}+1)\\
\overline{B(v_i,v_j)}^2 \le \overline{Q(v_i)}\cdot \overline{Q(v_j)}<1\cdot 1=1,
\end{cases}
\end{equation}
namely,
\begin{equation}\label{B''}
\begin{cases}
|B(v_i,v_j)|<\sqrt{(i\sqrt{\Delta_d}+1)(j\sqrt{\Delta_d}+1)}\\
|\overline{B(v_i,v_j)}|<1
\end{cases}
\end{equation}
for all $1\le i,j \le 15$. 
From the second condition of (\ref{B''}),
since $0\le \overline{B(v_i,v_j)}<1$ or $0 \le -\overline{ B(v_i,v_j)} <1$,
we have $$B(v_i,v_j) =(m_{a_{ij}}+a_{ij}\omega_d) \text{ or } -(m_{a_{ij}}+a_{ij}\omega_d)$$ for some $a_{ij}\in \mathbb Z $ where $m_{a_{ij}}=-\lfloor a_{ij}\overline{\omega_d}\rfloor$,
and from the first condition of (\ref{B''}), we have $$|a_{ij}|\le
\sqrt{ij} \le 15.$$ Since
$(m_{a_{ij}}+a_{ij}\omega_d)-a_{ij} \sqrt{\Delta_d}$ are in $[0,1]$, the
Gram-matrix of $L_2$ with respect to $\{v_1,v_2,\cdots, v_{15}\}$
may be written in the form
$$\begin{pmatrix}
B({v_i},{v_j})
\end{pmatrix}_{15 \times 15} = \sqrt{\Delta_d}\begin{pmatrix}
a_{ij}
\end{pmatrix}_{15 \times 15}+
\begin{pmatrix}
\epsilon_{ij}
\end{pmatrix}_{15 \times 15}$$
where $\left\{
\begin{array}{rl} \begin{pmatrix}
a_{ij}
\end{pmatrix}_{15 \times 15} \in M_{15}(\mathbb Z) \quad
&\mbox{with} \ \ |{a_{ij}}|\le15 \text{ and } a_{ii}=i\\
\begin{pmatrix}
\epsilon_{ij}
\end{pmatrix}_{15 \times 15} \in M_{15}(\mathbb R)
&\mbox{with} \ \ |{\epsilon_{ij}}|<1.\end{array}\right.$ \vskip
0.1cm

On the other hand, since $L$ is positive definite, the Gram-matrix
$$\begin{pmatrix}
B({v_i'},{v_j'})
\end{pmatrix}_{k \times k}=\sqrt{\Delta_d}\begin{pmatrix}
{a_{ij}'}
\end{pmatrix}_{k \times k}+
\begin{pmatrix}
{\epsilon_{ij}'}
\end{pmatrix}_{k \times k}$$
is positive semi-definite by Lemma
\ref{0} for any ${v_1'},\cdots, {v_k'} \in \{v_1, \cdots, v_{15}\}$
with $a_{ij}' \in \mathbb Z \cap [-15,15]$ and $\epsilon_{ij}'\in
[-1,1]$, which yields
$$\det\begin{pmatrix}
B({v_i'},{v_j}')_{k\times k}
\end{pmatrix}\ge0.$$
We may use Lemma \ref{1} with
$x=\sqrt{\Delta_d}$ and $|{a_{ij}'}|,|{\epsilon_{ij}'}|\le15=N$ to see that for
any ${v_1'},\cdots, {v_k'} \in \{v_1,v_2, \cdots, v_{15}\}$ for
$k=1,2,3,4$,
$$\det \left(\begin{pmatrix} {a_{ij}'}
\end{pmatrix}_{k \times k} \right) \ge 0.$$
For otherwise (i.e., if $\det \left(\begin{pmatrix} {a_{ij}'}
\end{pmatrix}_{k \times k}\right) \in \mathbb Z_{<0}$ for $k=1,2,3,4$), then
\begin{align*}
\det\begin{pmatrix}
B({v_i'},{v_j}')_{k\times k}
\end{pmatrix} & <\det(\begin{pmatrix} {a_{ij}'}
\end{pmatrix}_{k \times k})\sqrt{\Delta_d}^k+4!\cdot4!\cdot 15^k\cdot(\sqrt{\Delta_d}^{k-1}+\cdots+1)
\\
 & < -\sqrt{\Delta_d}^k+ 4!\cdot4!\cdot 15^k\cdot k\cdot\sqrt{\Delta_d}^{k-1}\\
& = \sqrt{\Delta_d}^{k-1}(-\sqrt{\Delta_d}+k \cdot 4!\cdot4!\cdot 15^k)<0
\end{align*}
holds.
This is a contradiction to the positive definiteness of $L$.
Therefore we obtain that for any ${v_1'},\cdots, {v_k'} \in
\{v_1,v_2, \cdots, v_{15}\}$ for $k=1,2,3,4$,
$$\begin{pmatrix} {a_{ij}'}
\end{pmatrix}_{k \times k}$$
is positive semi-definite symmetric matrix in $M_{k \times
k}(\mathbb Z)$. Since the minimal
rank of a positive definite universal $\mathbb Z$-lattice is $4$, by
the {\it 15-Theorem} \cite{CS}, we may take $v_1',v_2',v_3',v_4'
\in \{v_1,v_2, \cdots, v_{15}\}$ such that $\begin{pmatrix} a_{ij}'
\end{pmatrix}_{4 \times 4} \in M_4(\z)$ is positive definite, i.e.,
with $\det \begin{pmatrix}
(a_{ij}'
\end{pmatrix}_{4 \times 4})>0$.
As a brief illustration, take $v_1'=v_1$, $v_2'=v_2$ and $v_3'=v_3$ (resp. $v_3'=v_6$) when $a_{12}=\pm1$ (resp. $a_{12}= 0$).
And then $(a_{ij}')_{3\times 3} \in M_3(\mathbb Z)$ is a positive definite.
So we may take a positive integer $h \le 15$ which is not represented by $(a_{ij}')_{3\times 3}$ over $\z$.
By taking $v_4'=v_h$, we may construct a positive definite $(a_{ij}')_{4 \times 4} \in M_4(\z)$.
By applying Lemma \ref{1} again, we obtain
$$\det \begin{pmatrix}
 B(v_i',v_j')
_{4 \times 4}\end{pmatrix}>0,$$
yielding $v_1',v_2',v_3',v_4'$ are linearly independent by Lemma \ref{0}.
This completes the proof.
\end{proof}

\vskip 0.7cm

\section{Main Theorem}
\vskip 0.3cm
In this last section, we conclude that the rank of a positive definite classic
universal $\mathcal O_F$-lattice $L$ is greater than or equal to $8$ for
$F=\q (\sqrt{d})$ 
satisfying
\begin{equation} \label{d cond}
\Delta_d>(29524500000005)^2.
\end{equation}

\vskip 0.3cm

\begin{thm} \label{Main Thm}
For $F=\mathbb Q(\sqrt{d})$ satisfying (\ref{d cond}),
there is no positive definite
classic universal $\mathcal{O}_F$-lattice of rank $7$.\end{thm}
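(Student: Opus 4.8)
The plan is to assume, for contradiction, that $L$ is a positive definite classic universal $\mathcal{O}_F$-lattice of rank $7$ with $\Delta_d$ satisfying (\ref{d cond}), and to manufacture eight $\mathbb{Z}$-linearly independent vectors of $L$, contradicting $\rank(L)=7$. I would begin from the two structures already in place: by Lemma~\ref{9} there are linearly independent $v_1',\dots,v_4'\in L$ representing elements $m_{k_i}+k_i\omega_d$ with $(B(v_i',v_j'))_{4\times4}=\sqrt{\Delta_d}\,(a_{ij}')_{4\times4}+(\epsilon_{ij}')_{4\times4}$, where $(a_{ij}')$ is positive definite over $\mathbb{Z}$ with $|a_{ij}'|\le15$ and $|\epsilon_{ij}'|<1$; moreover $\overline{Q(v_i')}=\overline{m_{k_i}+k_i\omega_d}\in[0,1)$, so the conjugate Gram matrix $(\overline{B(v_i',v_j')})_{4\times4}$ has all entries in $(-1,1)$. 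By Remark~\ref{rmk2.5}, the $\mathbb{Z}$-universal sublattice $L_1$ generated by representatives of $1,\dots,15$ has $\mathbb{Z}$-rank $r_1\ge4$. A quick observation I would record is that any $\mathbb{Z}$-basis $u_1,\dots,u_{r_1}$ of $L_1$ is automatically $F$-linearly independent: from a relation $\sum_j(c_j+e_j\omega_d)u_j=0$ with $c_j,e_j\in\mathbb{Z}$ one gets $Q\bigl(\sum_j(c_j+e_j\omega_d)u_j\bigr)=0$, and expanding through the integral positive definite Gram matrix $B_1=(B(u_i,u_j))$ of $L_1$ turns this into a sum $\mathbf{c}^{T}B_1\mathbf{c}+(\text{positive multiple of }d)\cdot\mathbf{e}^{T}B_1\mathbf{e}+\cdots=0$ of non-negative terms, forcing $\mathbf{c}=\mathbf{e}=0$. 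Hence $L_1$ has $F$-linear span of dimension $r_1$ inside $L\otimes_{\mathbb{Z}}F$, so $4\le r_1\le7$.

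Next I would try to extend $v_1',\dots,v_4'$ to linearly independent $v_1',\dots,v_8'$ by iterating the construction of Lemma~\ref{9}. At stage $j$ ($4\le j\le7$), with $v_1',\dots,v_j'$ in hand whose integer part $(a_{ij}')_{j\times j}$ is positive definite, I would pick an integer $k$ and let $v_{j+1}'$ represent $m_k+k\omega_d$: the new diagonal entry of the integer part is $k$, the off-diagonal entries satisfy $|a_{i,j+1}'|\le\sqrt{k\,a_{ii}'}$ by positive-definiteness in the identity embedding ($B(v_i',v_{j+1}')^2\le Q(v_i')Q(v_{j+1}')$) and $|\overline{B(v_i',v_{j+1}')}|<1$ by positive-definiteness in the conjugate embedding (since $\overline{Q(v_{j+1}')}\in[0,1)$), and Lemma~\ref{1} forces $\det(a_{ij}')_{(j+1)\times(j+1)}$ to be a non-negative integer because the corresponding $(j+1)\times(j+1)$ block of $(B(v_i',v_j'))$ is positive semidefinite. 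The task is to choose $k$ so that this determinant is strictly positive — equivalently, that not every representative of every $m_k+k\omega_d$ lies in the rank-$j$ $\mathcal{O}_F$-span of $v_1',\dots,v_j'$. This is where $\rank(L)=7$, universality and $L_1$ must be played against one another: if all such representatives lay in a fixed $\mathcal{O}_F$-sublattice of rank $<7$, that sublattice together with $L_1$ would have to carry the full universality of $L$, and the rigid coupling of the two embeddings — concretely, that the conjugate Gram matrix of these vectors is exactly the ``$\epsilon$-part'' of the identity-embedding one — makes this impossible once $\Delta_d$ is as large as in (\ref{d cond}); bounding the relevant denominators uses that $\det B_1=\mathrm{disc}(L_1)$ is an absolute constant, there being only finitely many universal $\mathbb{Z}$-lattices of rank $\le7$.

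Once $v_1',\dots,v_8'\in L$ are produced with $(a_{ij}')_{8\times8}$ positive definite and all $|a_{ij}'|$ bounded by a constant $N_0$ coming out of the recursion, Lemma~\ref{1} with $x=\sqrt{\Delta_d}$, $k=8$, $s=0$, $N=N_0$ gives
\[
\det\bigl(B(v_i',v_j')\bigr)_{8\times8}\ >\ \det(a_{ij}')_{8\times8}\,\sqrt{\Delta_d}^{\,8}-8!\cdot8!\cdot N_0^{8}\,(\sqrt{\Delta_d}^{\,7}+\cdots+1)\ \ge\ \sqrt{\Delta_d}^{\,8}-8\cdot8!\cdot8!\cdot N_0^{8}\,\sqrt{\Delta_d}^{\,7},
\]
which is positive by (\ref{d cond}); so by Lemma~\ref{0}(2) the vectors $v_1',\dots,v_8'$ are linearly independent, contradicting $\rank(L)=7$, and the theorem follows. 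The numerical value $(29524500000005)^2$ should be exactly what is needed to keep all of the stage-by-stage determinant inequalities (together with this final one) strict after propagating the entry bounds through the four extension steps and through Lemma~\ref{1}.

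The step I expect to be the main obstacle is guaranteeing that each extension raises the rank, i.e.\ that $\det(a_{ij}')_{(j+1)\times(j+1)}>0$ can be arranged. In Lemma~\ref{9} the jump from rank $3$ to rank $4$ is easy because a ternary $\mathbb{Z}$-lattice is never universal, so one simply picks a non-represented integer $\le15$; from rank $4$ onward the integer parts $(a_{ij}')$ may themselves be universal $\mathbb{Z}$-lattices, so this shortcut disappears and one must instead exploit the $\mathcal{O}_F$-module structure, the rational sublattice $L_1$, and positive-definiteness in both real embeddings simultaneously (with Lemma~\ref{2} controlling how far a non-rational algebraic integer must spread between the two embeddings). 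That genuinely quadratic-field-theoretic input is presumably the reason the threshold on $\Delta_d$ has to be taken so large.
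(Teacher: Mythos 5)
There is a genuine gap at exactly the point you flag as ``the main obstacle,'' and the gap is fatal: your proposal never actually produces the eight linearly independent vectors. Your plan is to iterate the construction of Lemma~\ref{9}, choosing at each stage $j$ another vector representing some $m_k+k\omega_d$ and arguing that $k$ can be chosen so that $\det(a_{ij}')_{(j+1)\times(j+1)}>0$. But the mechanism that makes this work in Lemma~\ref{9} --- that no ternary $\mathbb{Z}$-lattice is universal, so some $h\le 15$ is missed by $(a_{ij}')_{3\times 3}$ --- is unavailable from rank $4$ onward, since $(a_{ij}')_{4\times 4}$ may well be a universal $\mathbb{Z}$-lattice representing every $k\le 15$ (or $290$). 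Your proposed substitute (``if all such representatives lay in a fixed $\mathcal{O}_F$-sublattice of rank $<7$, that sublattice together with $L_1$ would have to carry the full universality of $L$, and the rigid coupling of the two embeddings makes this impossible'') is an unargued assertion, not a proof; nothing in the paper's toolkit, and nothing you supply, shows that representatives of $m_k+k\omega_d$ cannot all fall into a proper sublattice. As written, the construction of $v_5',\dots,v_8'$ does not exist.

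The paper sidesteps this difficulty entirely, and the missing idea is worth internalizing: it does not iterate Lemma~\ref{9}. Instead it takes the four vectors $v_1,\dots,v_4$ from Lemma~\ref{9} (representing $m_k+k\omega_d$, so the Gram block is $\sqrt{\Delta_d}A+D$ with $A\in M_4(\mathbb{Z})$ positive definite) together with four \emph{different} vectors $v_5,\dots,v_8$ from Remark~\ref{rmk2.5} representing the rational integers $1,\dots,15$, whose Gram block $B\in M_4(\mathbb{Z})$ is positive definite with entries bounded by $15$. The only new work is to show that the cross terms $c_{ij}=B(v_i,v_j)$ ($1\le i\le 4$, $5\le j\le 8$) are bounded rational integers: positive definiteness in the conjugate embedding forces $|\overline{c_{ij}}|<4$, so $c_{ij}=-\lfloor n_{ij}\overline{\omega_d}\rfloor+n_{ij}\omega_d+m_{ij}$ with $m_{ij}$ bounded, and if $n_{ij}\neq 0$ then $|c_{ij}|\gg\sqrt{\Delta_d}/2$ would violate $Q(v_i)Q(v_j)-c_{ij}^2\ge 0$ in the identity embedding; hence $c_{ij}\in\mathbb{Z}\cap[-4,3]$. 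Then the full $8\times 8$ Gram matrix has the shape $\sqrt{\Delta_d}\left(\begin{smallmatrix}A&O\\O&O\end{smallmatrix}\right)+(\text{bounded})$, and Lemma~\ref{1} with $k=s=4$, $N=15$ gives $\det\ge\det(A)\det(B)\,\Delta_d^2-(\text{lower order})>0$ under (\ref{d cond}), whence the eight vectors are independent by Lemma~\ref{0}. Your first paragraph (the setup via Lemma~\ref{9} and Remark~\ref{rmk2.5}) and your final determinant estimate are in the right spirit, but the middle of your argument needs to be replaced by this two-family block construction rather than an inductive extension.
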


\begin{proof}
Let $L$ be a positive definite classic universal $\mathcal O_F$-lattice.
By Remark \ref{rmk2.5}, we may take four vectors $v_5,\cdots, v_8 \in L$ for which
$$\begin{pmatrix}
B({v_i},{v_j})
\end{pmatrix}_{5 \le i,j \le 8} \in M_4(\z)$$ is positive definite with
$|B({v_i},{v_j})| \le 15$ for all $5\le i,j \le 8$.
And by Lemma \ref{9}, we may take four vectors
$v_1,\cdots, v_4 \in L$ for which
$$\begin{pmatrix}
B({v_i},{v_j})
\end{pmatrix}_{1 \le i,j \le 4}= \sqrt{\Delta_d}\begin{pmatrix}
a_{ij}
\end{pmatrix}_{4 \times 4}+
\begin{pmatrix}
\epsilon_{ij}
\end{pmatrix}_{4 \times 4}$$
where $$\left\{
\begin{array}{rl} \begin{pmatrix}
a_{ij}
\end{pmatrix}_{4 \times 4} \in M_{4}(\mathbb Z) \quad
&\mbox{with} \ \ |{a_{ij}}|\le 15 \\
\begin{pmatrix} \epsilon_{ij}
\end{pmatrix}_{4 \times 4} \in M_{4}(\mathbb R)
&\mbox{with} \ \ |{\epsilon_{ij}}|<1\end{array}\right.$$ and $\det
\begin{pmatrix} a_{ij}
\end{pmatrix}_{4 \times 4} \ge 1.$ Now consider the Gram-matrix
$$\begin{pmatrix}
B({v_i},{v_j})
\end{pmatrix}_{1 \le i,j \le 8} \in M_8(\mathcal O_F).$$
We may write
$$\begin{pmatrix}
B({v_i},{v_j})\end{pmatrix}_{1 \le i,j \le 8}=
\sqrt{\Delta_d}
\begin{pmatrix}
A & O \\ O & O
\end{pmatrix}+
\begin{pmatrix}
D & C \\ C^T & B
\end{pmatrix}$$
where $A:=\begin{pmatrix}a_{ij}\end{pmatrix}_{1\le i,j \le 4},B:=
\begin{pmatrix}b_{ij}\end{pmatrix}_{5\le i,j \le 8}\in M_4(\z)$
are symmetric positive definite and the absolute values of the
entries of $A,B,C,D \in M_4(\mathbb R)$ are less than or equal to
$15$. For by Lemma \ref{4} and from the construction of
$v_1,v_2,v_3,v_4$, we obtain that
$A$ and $B$ are symmetric positive definite and the absolute values
of the entries of $A,B$ and $D$ are less than or equal to $15$. And
from the positive definiteness of $L$,
$$Q(v_i)Q(v_j)-B(v_i,v_j)^2=
 (\sqrt{\Delta_d} \cdot a_{ii}+d_{ii})\cdot b_{jj}-c_{ij}^2$$
where $1\le i\le 4$
and $5 \le j \le 8$, should be either totally positive or
zero for $C:=\begin{pmatrix}c_{ij}\end{pmatrix}_{1\le i \le 4, 5\le
j \le 8} \in M_4(\mathcal O_F)$. Note that
$$
\left\{\begin{array}{ll} 0< (\sqrt{\Delta_d} \cdot a_{ii}+d_{ii})\cdot b_{jj}
<  (15\sqrt{\Delta_d}+1)\cdot15  \\
0<\overline{(\sqrt{\Delta_d} \cdot a_{ii}+d_{ii})\cdot b_{jj}}<1\cdot 15  \end{array}\right.
$$
for $1 \le i \le 4$ and $5\le j \le 8$.
In order for  $\overline{(\sqrt{\Delta_d} \cdot a_{ii}+d_{ii})\cdot b_{jj}-c_{ij}^2}$
to be positive $|\overline{c_{ij}}|$ should be less than $4$, so the integer $c_{ij}$
should be of a form of $$-\floor{n_{ij} \cdot \overline{\omega_d}}+n_{ij} \cdot \omega_d+m_{ij}$$
for some $n_{ij} \in \mathbb Z$ and $m_{ij} \in \mathbb Z \cap [-4,3]$.
On the other hand, if $c_{ij}
= -\floor{n_{ij} \cdot \overline{\omega_d}}+n_{ij} \cdot \omega_d+m_{ij}
\in \mathcal O_F \setminus \mathbb Z$ (i.e., $n_{ij} \not=0$), then
$$\frac{\sqrt{\Delta_d}}{2} \ll |c_{ij}|$$ 
and hence
$(\sqrt{\Delta_d} \cdot a_{ii}+d_{ii})\cdot b_{jj}-c_{ij}^2<0$, which is a
contradiction to the positive definiteness of $L$. Thus $c_{ij}$
would be a rational integer in $[-4,3]$.

Using Lemma \ref{1}, we obtain
\begin{align*}
\det
\begin{pmatrix}
B({v_i},{v_j})\end{pmatrix}_{8 \times 8} & \ge \det(A)\det(D)(\sqrt{\Delta_d})^4
-15^8\sum \limits_{l=0}^3 \frac{4!\cdot 4!\cdot (8-l)!}{(4-l)! \cdot l!
\cdot (4-l)!} \cdot (\sqrt{\Delta_d})^l\\
 & \ge (\sqrt{\Delta_d})^4-15^8\sum \limits_{l=0}^3  \frac{4!\cdot 4!\cdot (8-l)!}{(4-l)!
 \cdot l! \cdot (4-l)!} \cdot (\sqrt{\Delta_d})^l>0
\end{align*}
because $x^4-15^8\sum \limits_{l=0}^3
\frac{4!\cdot 4!\cdot (8-l)!}{(4-l)! \cdot l! \cdot (4-l)!} \cdot
x^l>0$ for all $$x\ge
29524500000005.$$ These indicate that $v_1, v_2, \cdots, v_8$ are
linearly independent. Thus the rank of $L$ is greater than 7.
\end{proof}
\vskip 0.3em

\begin{thm} \label{Main Thm'} Let
$F=\mathbb Q(\sqrt{d})$ where $d$ is a square free positive integer
satisfying 
$$\Delta_d>(576283867731072000000005)^2.$$
Then there is no positive
definite non-classic universal $\mathcal{O}_F$-lattice of rank
7.\end{thm}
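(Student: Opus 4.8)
The plan is to run the proof of Theorem \ref{Main Thm} almost verbatim, making two systematic changes: replace the bilinear form $B(\cdot,\cdot)$ throughout by $2B(\cdot,\cdot)$, which always lies in $\mathcal O_F$ since $2B(x,y)=Q(x+y)-Q(x)-Q(y)$; and, wherever a genuinely non-classic positive definite $\mathbb Z$-lattice must be recognized as having rank $\ge 4$, invoke the $290$-Theorem of Bhargava--Hanke in place of the $15$-Theorem. So let $L$ be a positive definite non-classic universal $\mathcal O_F$-lattice; we exhibit eight $F$-linearly independent vectors in $L$.

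First I would prove the non-classic analogues of Lemma \ref{4}, Remark \ref{rmk2.5} and Lemma \ref{9}. If $v,w\in L$ satisfy $Q(v),Q(w)\in\mathbb Z$ with $1\le Q(v),Q(w)\le 145$, then $Q(v)Q(w)-B(v,w)^2$ is totally positive or zero, so ${\rm tr}\big((2B(v,w))^2\big)\le 4\,{\rm tr}(Q(v)Q(w))\le 8\cdot 145^2<\Delta_d/2$, and Lemma \ref{2} forces $2B(v,w)\in\mathbb Z$, whence $|2B(v,w)|\le 2\sqrt{Q(v)Q(w)}\le 290$. Taking $v_1,\dots,v_{145}\in L$ with $Q(v_n)=n$, the $\mathbb Z$-module $L_1$ they generate, with $Q|_{L_1}$ (which is $\mathbb Z$-valued because $\mathcal O_F\cap\tfrac12\mathbb Z=\mathbb Z$), is a positive definite $\mathbb Z$-lattice representing $1,\dots,145$; since a positive definite $\mathbb Z$-lattice of rank $\le 3$ cannot represent all of $1,\dots,145$ (all escalators of rank $\le 3$ in the proof of the $290$-Theorem have a truant well below $145$), $L_1$ has rank $\ge 4$, and arguing as in Remark \ref{rmk2.5} we obtain $v_5,\dots,v_8\in\{v_1,\dots,v_{145}\}$ with $\big(2B(v_i,v_j)\big)_{5\le i,j\le 8}\in M_4(\mathbb Z)$ symmetric positive definite and all entries of absolute value $\le 290$. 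For the analogue of Lemma \ref{9} I would take $v_k\in L$ with $Q(v_k)=m_k+k\omega_d$, $m_k=-\floor{k\overline{\omega_d}}$, for $k=1,\dots,15$; since $\overline{Q(v_k)}\in[0,1)$ one gets, exactly as there, $2B(v_i,v_j)=a_{ij}\sqrt{\Delta_d}+\epsilon_{ij}$ with $a_{ij}\in\mathbb Z$, $|a_{ij}|\le\sqrt{ij}\le 15$, $|\epsilon_{ij}|<2$, $a_{kk}=k$; Lemma \ref{1} rules out negative principal minors of order $\le 4$ of the \emph{classic} integer matrix $(a_{ij})$, and (forming a positive definite $3\times 3$ block from $v_1,v_2$ and one of $v_3,v_6$, then adjoining $v_h$ for a value $h\le 15$ missed by that ternary, using the $15$-Theorem) one extracts linearly independent $v_1,\dots,v_4$ with $(a_{ij})_{4\times 4}\in M_4(\mathbb Z)$ positive definite, $\det(a_{ij})_{4\times 4}\ge 1$.

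Now I would imitate the proof of Theorem \ref{Main Thm} for the eight vectors $v_1,\dots,v_8$. For $1\le i\le 4<j\le 8$, from $\overline{Q(v_i)}<1$, $Q(v_j)\le 145$ and $B(v_i,v_j)^2\le Q(v_i)Q(v_j)$ one checks ${\rm tr}\big((2B(v_i,v_j))^2\big)<\Delta_d/2$ for $\Delta_d$ large, so $2B(v_i,v_j)\in\mathbb Z$ with $|2B(v_i,v_j)|<2\sqrt{145}<290$. Writing
\[
\big(2B(v_i,v_j)\big)_{1\le i,j\le 8}=\sqrt{\Delta_d}\begin{pmatrix}A & O\\ O & O\end{pmatrix}+\begin{pmatrix}D & C\\ C^{T} & B\end{pmatrix},
\]
with $A=(a_{ij})_{4\times 4}$ and $B=\big(2B(v_i,v_j)\big)_{5\le i,j\le 8}$ symmetric positive definite in $M_4(\mathbb Z)$ (so $\det A,\det B\ge 1$) and all entries of $A,B,C,D$ of absolute value $\le 290$, Lemma \ref{1} with $k=s=4$, $x=\sqrt{\Delta_d}$, $N=290$ gives
\[
\det\big(2B(v_i,v_j)\big)_{8\times 8}\ \ge\ (\sqrt{\Delta_d})^{4}-290^{8}\sum_{l=0}^{3}\frac{4!\,4!\,(8-l)!}{(4-l)!\,l!\,(4-l)!}\,(\sqrt{\Delta_d})^{l}>0
\]
for $\sqrt{\Delta_d}>290^{8}\cdot 11520+5=576283867731072000000005$, which is the same polynomial estimate as in Theorem \ref{Main Thm} but with $290$ in place of $15$. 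Since $\det\big(2B(v_i,v_j)\big)_{8\times 8}=2^{8}\det\big(B(v_i,v_j)\big)_{8\times 8}$, this forces $\det\big(B(v_i,v_j)\big)_{8\times 8}>0$, hence $v_1,\dots,v_8$ are linearly independent by Lemma \ref{0}, so $\rank L\ge 8$, a contradiction.

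The step demanding most care is the factor-of-$2$ bookkeeping. One must verify at every stage that, after passing from $B$ to $2B$, the relevant Gram-type matrices actually land in $M_k(\mathbb Z)$ — so that the $\mathbb Z$-lattice theorems apply and the two $4\times 4$ corner blocks have determinant $\ge 1$ — and one must keep the uniform entry bound equal to $290$. It is precisely in order to hold that bound that the ``integer'' block is built from realizations of $1,\dots,145$, so that $|2B(v_i,v_j)|\le 2\cdot145=290$, rather than of $1,\dots,290$, while the ``$\omega$'' block, whose coefficient matrix $(a_{ij})$ is classic, still needs only the values $1,\dots,15$; and it is this $N=290$, fed through the $x^{3}$-coefficient $11520$ of Lemma \ref{1}, that produces the threshold $\big(290^{8}\cdot 11520+5\big)^{2}$ on $\Delta_d$, in exact parallel with $\big(15^{8}\cdot 11520+5\big)^{2}$ in Theorem \ref{Main Thm}.
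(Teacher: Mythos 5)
Your overall strategy is exactly the paper's: the published proof of this theorem is literally the one sentence ``apply the 290-Theorem instead of the 15-Theorem in the preceding lemmas,'' and you have correctly reverse-engineered the constant as $290^{8}\cdot 11520+5$ and supplied far more detail than the paper does. The final determinant estimate, the treatment of the cross block $C$, and the passage from $\det\big(2B(v_i,v_j)\big)>0$ to linear independence are all fine.

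However, two of your intermediate steps have genuine gaps. First, in your analogue of Lemma \ref{9} you extract the fourth vector by ``using the $15$-Theorem'' to find some $h\le 15$ missed by the ternary block. But the relevant $\mathbb Z$-valued form here is $\tfrac12(a_{ij})$, which is in general \emph{non-classic}, and the 15-Theorem says nothing about non-classic forms. The step actually fails as stated: the non-classic positive definite ternary $x^{2}+y^{2}+yz+3z^{2}$ represents every integer from $1$ to $21$ (its first exception is $22$), so no $h\le 15$ need exist. You must instead run the $k$'s in the $\omega$-block up to the largest truant $T$ of the rank-$\le 3$ escalators in the Bhargava--Hanke tree, which inflates the entry bound on $A$ from $30$ to $2T$. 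Second, your claim that no positive definite $\mathbb Z$-lattice of rank $\le 3$ represents all of $1,\dots,145$ (``all rank-$\le 3$ escalators have truant well below $145$'') is asserted, not proved; it is precisely the statement $T\le 145$ again, and it is a nontrivial numerical fact that must be checked against the escalator tables of \cite{290} — if some ternary escalator had truant in $(145,290]$, your uniform bound $N=290$, and hence the stated threshold, would be lost. (Both gaps reduce to the single unverified inequality $T\le 145$.) There are also harmless bookkeeping slips: with the normalization $2B$ one gets $a_{kk}=2k$ and $|a_{ij}|\le 2\sqrt{ij}\le 30$, not $a_{kk}=k$ and $|a_{ij}|\le 15$. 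To be fair, the paper's own one-line proof glosses over all of these points as well, but a complete argument needs the truant bound made explicit.
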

\begin{proof}
This theorem can be proved easily
by applying {\it 290-Theorem} \cite{290} instead of {\it 15-Theorem}
\cite{CS} in the proofs of Lemmas preceding Theorem \ref{Main Thm}.
\end{proof}

\vskip 1cm

\end{document}